\newcommand{\NN}{\mathbb{N}}
\newcommand{\ZZ}{\mathbb{Z}}
\newcommand{\RR}{\mathbb{R}}
\newcommand{\TT}{\mathbb{T}}
\newcommand{\FF}{\mathbb{F}}
\newcommand{\EE}{\mathbb{E}}
\newtheorem{theorem}{Theorem}[section]
\newtheorem{proposition}[theorem]{Proposition}
\newtheorem{definition}[theorem]{Definition}
\newtheorem{lemma}[theorem]{Lemma}
\newtheorem{remark}[theorem]{Remark}
\title{Cuplength estimates for time-periodic measures of Hamiltonian systems with diffusion}
\author{Oliver Fabert}
\thanks{O. Fabert, VU Amsterdam, The Netherlands. Email: o.fabert@vu.nl}
\begin{document}
\maketitle

\begin{abstract} We show how methods from Hamiltonian Floer theory can be used to establish lower bounds for the number of different time-periodic measures of time-periodic Hamiltonian systems with diffusion. After proving the existence of closed random periodic solutions and of the corresponding Floer curves for Hamiltonian systems with random walks with step width $1/n$ for every $n\in\NN$, we show that, after passing to a subsequence, they converge in probability distribution as $n\to\infty$. Besides using standard results from Hamiltonian Floer theory and about convergence of tame probability measures, we crucially use that sample paths of Brownian motion are almost surely H\"older continuous with H\"older exponent $0<\alpha<\frac{1}{2}$. \end{abstract}

\tableofcontents
\markboth{O. Fabert}{Time-periodic measures of diffusive Hamiltonian systems} 

\section{Random walks and the diffusion equation}\label{section 1}

Let $\Omega_n=(\Omega_n,\mathcal{P}(\Omega_n),\nu_n)$ with $\Omega_n=\FF_2^n$, $\FF_2=\{-1,+1\}$ denote the probability space for the $n$-fold coin-flip experiment equipped with the canonical counting measure $\nu_n$ as probability measure. Denoting by $\epsilon_i:\Omega_n\to\FF_2$ the projection onto the $i$.th component, we hence have for every $i=1,\ldots,n$ that $\nu_n(\{\omega_n\in\Omega_n:\epsilon_i(\omega_n)=\pm 1\})=\frac{1}{2}$ and $\epsilon_i$ is stochastically independent of $\epsilon_j$ for $j\neq i$. We define the discrete \emph{stochastic process} $W_n:\Omega_n\times [0,1]\to\RR$ as the rescaled linear interpolation of the $n$-fold \emph{random walk} with step width $\Delta t=\frac{1}{n}$ given by \[W_n(\omega_n,t)=\frac{1}{\sqrt{n}}\left(\sum_{i=1}^{\lfloor nt\rfloor} \epsilon_i(\omega_n)+\left(t-\frac{\lfloor nt\rfloor}{n}\right) \epsilon_{\lfloor nt\rfloor+1}(\omega_n)\right)\,\,\textrm{for every}\,\,\omega_n\in\Omega_n.\] While the expectation value of $W_n(\omega_n,1)$ is $\EE W_n(\omega_n,1)=0$, note that the rescaling factor $\frac{1}{\sqrt{n}}$ is chosen in order to guarantee that the variance is normalized to $\EE\left(W_n(\omega_n,1)-\EE W_n(\omega_n,1)\right)^2=1$.\\\indent 
It follows from the central limit theorem that $W_n(\cdot,1)$ converges \emph{in distribution} to $\mathcal{N}(0,1)$, the normal distribution with expectation value $0$ and variance $1$. In the same way one finds that for each $t\in [0,1]$ we have that $W_n(\cdot,t)$ converges in distribution to $\mathcal{N}(0,t)$. By the latter we mean that \[\nu_n(\{\omega_n\in\Omega_n:W_n(\omega_n,t)\leq a\})\,\,\textrm{converges to}\,\, \frac{1}{\sqrt{2\pi t}}\int_{-\infty}^a \exp\left(-\frac{x^2}{2t}\right)\,dx\] as $n\to\infty$. Note that the limiting distribution agrees with the fundamental solution of the diffusion (or heat) equation, so that the stochastic processes $W_n$ model diffusion as $n\to\infty$.\\\indent 
More precisely, it is known by the \emph{functional central limit theorem} that the stochastic processes $W_n:\Omega_n\to C^0([0,1],\RR)$ converge in distribution in the sense that the pushforward measures $\mu_n:=\nu_n\circ W_n^{-1}$ on $C^0([0,1],\RR)$ converge as Borel measures to a limit measure $\mu$, called the \emph{Wiener measure}. Denoting by $\rho_n$, $\rho$ the distribution corresponding to the Borel measure on $[0,1]\times\RR$ obtained as pushforward of the product measure $\lambda\otimes\mu_n$, $\lambda\otimes\mu$ ($\lambda$ = Lebesgue measure on $[0,1]$) under the canonical continuous map $[0,1]\times C^0([0,1],\RR)\to [0,1]\times\RR$, $(t,W)\mapsto (t,W(t))$, we find that $\rho_n$ converges in the distributional sense to $\rho$, the fundamental solution of the one-dimensional diffusion equation. For this observe that, for every $t\in [0,1]$, the Borel measure $\rho_n(t,\cdot)$ on $\RR$ is obtained as push-forward of the counting measure $\nu_n$ under the map $W_n(\cdot,t):\Omega_n\to\RR$,  by functoriality.\\\indent 
Generalizing from one-dimensional random walks $W_n:\Omega_n\to C^0([0,1],\RR)$ to $d$-dimensional random walks $W_n=(W_n^1,\ldots,W_n^d):\Omega_n^d\to C^0([0,1],\RR^d)$, $W_n(\omega_n^1,\ldots,\omega_n^d)=(W_n^1(\omega_n^1),\ldots,W_n^d(\omega_n^d))$ with one-dimensional random walks $W^1_n,\ldots,W^d_n$, we find that the limiting Borel measure $\rho$ on $[0,1]\times\RR^d$ now solves the diffusion equation on $[0,1]\times\RR^d$ given by  \[\frac{\partial\rho}{\partial t} = \frac{1}{2}\cdot\frac{\partial^2\rho}{\partial x^2}\,\,\textrm{with the Laplace operator}\,\,\frac{\partial^2}{\partial x^2}=\frac{\partial^2}{\partial x_1^2}+\ldots+\frac{\partial^2}{\partial x_d^2}.\]
\begin{remark}\label{Wiener_process}
The functional limit theorem can be rephrased by saying that the $n$-fold random walks $W_n:\Omega_n\times [0,1]\to\RR$ converge \emph{in distribution} as $n\to\infty$ to the \emph{Wiener process} $W:\Omega\times [0,1]\to\RR$, where  $\Omega=C^0([0,1],\RR)$ is equipped with the Wiener measure $\mu$ and one defines $W(\omega,t)=\omega(t)$ for all $(\omega,t)\in\Omega\times [0,1]$. For an intuitive understanding, note that, after replacing $\Omega$ by $\Omega_H=\FF_2^H$, $W$ agrees "up to an infinitesimal error" with $W_H$, where $H$ is an arbitrary "unlimited" (hyper)natural number number, i.e., the Wiener process is a random walk with "infinitesimal" step width $\frac{1}{H}$. Indeed this idea can be made fully rigorous in the framework of nonstandard analysis using the concept of Loeb measures, see \cite{An}.
\end{remark}

\section{Random Hamiltonian systems and the Fokker-Planck equation}
While the above relation between random walks and the diffusion equation can be generalized from $\RR^d$ to arbitrary Riemannian manifolds $Q$, based on the definition of the Laplace operator for Riemannian manifolds and using piecewise geodesic paths, in this paper we restrict our focus to random walks and diffusions on $\TT^d$ which are simply obtained by passing to the quotient in the target. Although the definition below can hence be generalized from $T^*\TT^d$ to the cotangent bundle $T^*Q$ of an arbitrary Riemannian manifold, let  
$H:\TT^1\times T^*\TT^d\to\RR$ be a time-periodic Hamiltonian function on the cotangent bundle of the $d$-dimensional torus $\TT^d=\RR^d/\ZZ^d$, where we set $H_t:=H(t,\cdot)$, and fix some diffusion constant $\sigma\in\RR$. 
\begin{definition}\label{stoch_sol}
Given $H$ and $\sigma$ as above, we call $u=(u_n)_{n\in\NN}$ with $u_n=(q_n,p_n):\Omega_n^d\times [0,1]\to T^*\TT^d$ and $u_n(\omega_n,1)=u_n(\omega_n,0)$ for every $\omega_n\in\Omega_n^d$ a \emph{sequence of closed random walk Hamiltonian orbits} if for every $n\in\NN$ and for every $\omega_n\in\Omega_n^d$ we have 
\begin{eqnarray*}
 q_n(\omega_n,t)-q_n(\omega_n,0)&=&\int_0^t \frac{\partial H_t}{\partial p}(q_n(\omega_n,\tau),p_n(\omega_n,\tau))\,d\tau + \sigma\cdot W_n(\omega_n,t),\\ \frac{\partial p_n}{\partial t}(\omega_n,t)&=&-\frac{\partial H_t}{\partial q}(q_n(\omega_n,t),p_n(\omega_n,t)).
 \end{eqnarray*}
\end{definition}
Analogous to \Cref{section 1}, every sequence of (closed) random walk Hamiltonian orbits $u=(u_n)_{n\in\NN}$ defines a sequence of Dirac measures $\mu_n^u:=\nu_n\circ u_n^{-1}$, $n\in\NN$ on $C^0([0,1],T^*\TT^d)$. Assume for the moment that $(u_n)_{n\in\NN}$ converges in distribution in the sense that the corresponding sequence of Dirac measures $(\mu_n^u)_{n\in\NN}$ converges as Borel measures to a Borel measure $\mu^u$ on $C^0([0,1],T^*\TT^d)$. Then the distribution $\rho^u$ on $[0,1]\times T^*\TT^d$, corresponding to the Borel measure obtained again as pushforward of $\lambda\otimes\mu^u$ under the canonical evaluation map $[0,1]\times C^0([0,1],T^*\TT^d)\to [0,1]\times T^*\TT^d$, satisfies the periodicity condition $\rho^u(0,\cdot)=\rho^u(1,\cdot)$ and is a solution of the following Hamiltonian version of the \emph{Fokker-Planck equation} (or \emph{forward Kolmogorov equation} or \emph{drift-diffusion equation})  \begin{equation}\label{Fokker-Planck}\frac{\partial\rho^u}{\partial t}=-\frac{\partial}{\partial q}\left(\frac{\partial H_t}{\partial p}\cdot \rho^u\right)+\frac{\partial}{\partial p}\left(\frac{\partial H_t}{\partial q}\cdot \rho^u\right)+\frac{\sigma^2}{2}\cdot\frac{\partial^2\rho^u}{\partial q^2}.\end{equation} Note that the latter equation combines the Hamiltonian version of the continuity equation modelling the change of $\rho^u$ under drift with the heat equation from \Cref{section 1} modelling the change of $\rho^u$ under diffusion. In order to see that $\rho^u$ is indeed a solution of the Hamiltonian Fokker-Planck equation, observe that 
the equation is equivalent to 
\[D_t\rho^u:=\frac{\partial\rho^u}{\partial t}+\frac{\partial H_t}{\partial p}\cdot \frac{\partial\rho^u}{\partial q}-\frac{\partial H_t}{\partial q}\cdot \frac{\partial\rho^u}{\partial p}=\frac{\sigma^2}{2}\cdot\frac{\partial^2\rho^u}{\partial q^2},\]
where $D_t\rho^u$ denotes the \emph{material derivative} describing the change of the $\rho^u$ under the influence of the drift given by the Hamiltonian vector field. 
\begin{remark} When $\sigma=0$ and when $u_n(\omega_n,\cdot)\equiv u=(q,p)$ for all $n\in\NN$, $\omega_n\in\Omega_n^d$, then $\mu_n^u=\mu^u=\delta_u$ is the Dirac measure localized at $u\in C^0([0,1],T^*\TT^d)$ and $\rho^u(t,q,p)=\delta(q-q(t))\cdot\delta(p-p(t))$, and it is immediate to check that $\rho^u$ solves \Cref{Fokker-Planck} with $\sigma=0$, that is, the continuity equation. \end{remark}    
\indent Since for every $n\in\NN$ the stochastic process $W_n$ extends from $[0,1]$ to the entire real line in such a way that $W_n(\cdot,t_2+1)-W_n(\cdot,t_1+1)$ and $W_n(\cdot,t_2)-W_n(\cdot,t_1)$ both agree in distribution for all $t_1\leq t_2$, $\rho^u$ indeed extends to a measure on $\RR\times T^*\TT^d$ satisfying the periodicity condition $\rho^u(t+1,\cdot)=\rho^u(t,\cdot)$ for all $t\in\RR$.
By generalizing methods from Hamiltonian Floer theory we show the following main result of this paper.
\begin{theorem}\label{main}
Assume that the time-periodic Hamiltonian $H:\TT^1\times T^*\TT^d\to\RR$ is of the form $H_t(q,p)=\frac{1}{2}|p|^2+F_t(q,p)$ with a smooth, time-periodic function $F_{t+1}=F_t$ with finite $C^1$-norm, and let $\sigma\in\RR$ be arbitrary. Then there exist $d+1$ = cuplength of (the loop space of) $\TT^d$ different sequences $u=(u_n)_{n\in\NN}$ of contractible closed random walk Hamiltonian orbits in the sense of \Cref{stoch_sol}. After passing to a suitable subsequence, they converge in distribution to $d+1$ different limiting time-periodic probability measures $\rho^u$ on $[0,1]\times T^*\TT^d$. In particular, we obtain at least $d+1$ different solutions of the corresponding Hamiltonian Fokker-Planck equation. 
\end{theorem}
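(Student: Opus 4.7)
The plan is, for each fixed $n$ and $\omega_n\in\Omega_n^d$, to interpret the system in \Cref{stoch_sol} as a parametric (deterministic) Hamiltonian periodic orbit problem on $T^*\TT^d$ and then apply a cuplength estimate from Hamiltonian Floer theory. Concretely, since $W_n(\omega_n,\cdot)$ is a piecewise linear continuous path, its derivative $\dot W_n(\omega_n,t)$ is a piecewise constant $\RR^d$-valued step function, and the equations of \Cref{stoch_sol} are exactly Hamilton's equations for the time-dependent (piecewise continuous in $t$) Hamiltonian
\[H_t^{\omega_n}(q,p):=\tfrac12|p|^2+F_t(q,p)+\sigma\,\dot W_n(\omega_n,t)\cdot p\]
on $T^*\TT^d$, with the standard one-periodic boundary condition. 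Because $F_t$ has finite $C^1$-norm and the kinetic term $\tfrac12|p|^2$ controls the growth in $p$, the standard a priori energy and $C^0$-bounds for solutions of the Floer equation apply uniformly in $\omega_n$, after (if needed) a small convolution smoothing in $t$ to obtain an admissible Hamiltonian. The classical cuplength estimate for Hamiltonian Floer homology on $T^*\TT^d$ in the style of Hofer--Zehnder, Schwarz, and Viterbo then produces at least $\mathrm{cl}(\TT^d)=d+1$ contractible one-periodic orbits of $H_t^{\omega_n}$ for each $\omega_n$, indexed by the $d+1$ linearly independent cup products $\alpha_k\in H^*(\TT^d)$. Since $\Omega_n^d$ is finite, a trivial measurable selection produces $d+1$ sequences $u^{(1)},\ldots,u^{(d+1)}$ of closed random walk Hamiltonian orbits in the sense of \Cref{stoch_sol}.

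The second step is to take $n\to\infty$. To obtain tightness of the pushforward laws $\mu_n^{u^{(k)}}$ on $C^0([0,1],T^*\TT^d)$, I would exploit that $\dot p_n=-\partial_q F_t(q_n,p_n)$ combined with the $C^1$-bound on $F_t$ gives a uniform Lipschitz (hence $C^0$) bound on $p_n$, while the integral form of the $q$-equation
\[q_n(\omega_n,t_2)-q_n(\omega_n,t_1)=\int_{t_1}^{t_2}\bigl(p_n+\partial_p F_t\bigr)\,d\tau+\sigma\bigl(W_n(\omega_n,t_2)-W_n(\omega_n,t_1)\bigr)\]
shows that the modulus of continuity of $q_n$ is dominated by that of $W_n$ plus a Lipschitz contribution. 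The functional central limit theorem combined with the Kolmogorov--Chentsov / L\'evy H\"older regularity theorem (the ingredient singled out in the abstract) yields tightness of the laws of $W_n$ in every H\"older space $C^{0,\alpha}([0,1],\RR^d)$ with $\alpha<\tfrac12$, and hence tightness of $(\mu_n^{u^{(k)}})_n$ on $C^0([0,1],T^*\TT^d)$ via continuous dependence on the forcing. Prokhorov's theorem then allows us to pass to a common subsequence along which all $d+1$ families converge in distribution to Borel limits $\mu^{u^{(k)}}$, and hence to $d+1$ limit measures $\rho^{u^{(k)}}$ on $[0,1]\times T^*\TT^d$ solving \Cref{Fokker-Planck}.

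The step I expect to be the main obstacle is showing that these $d+1$ limits are genuinely distinct, since a priori two selections $\omega_n\mapsto u_n^{(k)}(\omega_n)$ and $\omega_n\mapsto u_n^{(k')}(\omega_n)$ could induce the same distribution on path space even though at each $\omega_n$ they yield distinct orbits. The route I would take is through the $d+1$ spectral invariants (action selectors) $c_{\alpha_k}(H_t^{\omega_n})$: they are Lipschitz-continuous in the Hamiltonian with respect to the Hofer norm, they are uniformly bounded in terms of $\|F_t\|_{C^1}$, and they are pointwise separated by the choice of cohomology class $\alpha_k$. Averaging over $\omega_n\in\Omega_n^d$ produces $d+1$ numerical invariants of the distributions $\mu_n^{u^{(k)}}$ which remain uniformly separated in $n$; the H\"older-space tightness from the previous step lets these invariants pass to the limit and yields $d+1$ distinct numerical invariants of the measures $\mu^{u^{(k)}}$, thereby distinguishing them. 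Combining this with the weak convergence $\mu_n^{u^{(k)}}\to\mu^{u^{(k)}}$ and with the periodicity of the increments of $W_n$ noted after \Cref{Fokker-Planck} yields the claimed $d+1$ distinct time-periodic probability solutions of the Hamiltonian Fokker--Planck equation.
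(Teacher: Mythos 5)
Your reformulation of \Cref{stoch_sol} as a classical periodic-orbit problem for the time-dependent Hamiltonian
\[
H_t^{\omega_n}(q,p)=\tfrac12|p|^2+F_t(q,p)+\sigma\,\dot W_n(\omega_n,t)\cdot p
\]
is valid at each fixed $n$ (since $W_n(\omega_n,\cdot)$ is piecewise linear), but it runs into a fatal problem exactly where you flag the main obstacle, namely in controlling things uniformly as $n\to\infty$. The derivative $\dot W_n(\omega_n,t)$ is piecewise constant with values $\pm\sqrt n$, so after any $p$-cut-off at radius $R$ the Hofer norm of the extra term $\sigma\,\dot W_n\cdot p$ grows like $R\,\sigma\sqrt n$, and it does not converge to anything as $n\to\infty$ (Brownian paths are almost surely nowhere differentiable). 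Consequently the spectral-invariant argument you propose — Lipschitz continuity of $c_{\alpha_k}(H_t^{\omega_n})$ in Hofer norm, together with uniform bounds in terms of $\|F\|_{C^1}$ — simply does not apply: the Hamiltonians $H_t^{\omega_n}$ are not a Hofer-bounded family in $n$. The paper sidesteps this precisely by a gauge transformation, replacing $u_n$ by $\bar u_n=u_n-(\sigma W_n,0)$ and the periodic boundary condition by the twisted one $\bar u_n(\omega_n,1)=\phi^{\omega_n}_1(\bar u_n(\omega_n,0))$ with $\phi^{\omega_n}_t(q,p)=(q-\sigma W_n(\omega_n,t),p)$. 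The transformed Hamiltonian is $K^{\omega_n}_t(\bar q,p)=\frac12|p|^2+F_t(\bar q+\sigma W_n(\omega_n,t),p)$, whose $C^1$- and Hofer norms coincide with those of $H_t$ and are hence uniform in both $n$ and $\omega_n$. This gauge step, which appeals to Dostoglou--Salamon for Floer theory with symplectomorphism-twisted boundary conditions, is the structural idea your proposal is missing, and without it the $n\to\infty$ limit cannot be taken.

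Two further gaps. First, the claim that ``$\dot p_n=-\partial_q F_t(q_n,p_n)$ plus the $C^1$-bound on $F$ gives a uniform $C^0$-bound on $p_n$'' is false as stated: a bound on $\dot p_n$ only bounds the oscillation $|p_n(t)-p_n(0)|$, not $|p_n(0)|$ itself, and the constant loops $p_n\equiv c$ with $|c|$ arbitrary survive the period-$1$ condition. One needs a bound coming from the symplectic action (as in the paper's \Cref{cut-off}, where the action controls the $L^2$-norm of $p$, which combined with the $W^{1,2}$-estimate gives the $C^0$-bound). Second, your mechanism for distinguishing the $d+1$ limit measures is too weak: spectral invariants $c_{\alpha_k}$ are not ``pointwise separated by the choice of cohomology class'' — for a constant Hamiltonian they all coincide, and in general strict separation is exactly the content of a cuplength argument, not an input to it. The paper instead proves strict separation in the limit by showing that the connecting Floer cylinders $\widetilde u^{\omega_n,j}_n$ themselves converge in distribution (using H\"older tightness of $W_n$, the interpolation estimate in \Cref{regularity}, and Gromov--Floer compactness), and then arguing that the limiting measure on Floer curves cannot concentrate on $s$-independent cylinders because of the intersection condition $(*4')$ — hence the expected energy, and thus the action gap $\mathcal L(\rho^{u^{j,+}})-\mathcal L(\rho^{u^{j,-}})$, is strictly positive. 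Averaging spectral invariants over $\omega_n$ without this geometric nondegeneracy argument does not yield a positive lower bound on the separation that survives $n\to\infty$.
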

Apart from the fact that we clearly expect that this statement can be proven for a larger class of Hamiltonians as long as they fulfill a suitable quadratic growth condition in the cotangent fibre, following the comment at the beginning of this section we also expect that the above theorem can be suitably generalized from time-periodic random walk Hamiltonian systems on $T^*\TT^d$ to those on the cotangent bundle $T^*Q$ of other Riemannian manifolds $Q$, possibly under additional restrictions such as the existence of a global orthonormal frame. In view of these broader questions about the interplay between Hamiltonian systems with random walks and solutions of Fokker-Planck equations on more general symplectic manifolds, this paper focuses on a proof using methods from Hamiltonian Floer theory, where our main aim is to illustrate how the weak notion of convergence in distribution and the limiting Brownian motion with its almost surely non-differentiable sample paths can still be incorporated in the analytical framework of Hamiltonian Floer theory. \emph{We would like to emphasize that this paper is written for researchers with a background in Hamiltonian Floer theory, in particular, no prior knowledge about stochastic processes is required.}
\begin{remark}
Following up on \Cref{Wiener_process}, our result can be used to establish the existence of $d+1$ different solutions of the Hamiltonian \emph{stochastic differential equation} 
\begin{eqnarray*}
 dq(\omega,t)&=&\frac{\partial H_t}{\partial p}(q(\omega,t),p(\omega,t))\,dt + \sigma\cdot dW_t(\omega),\\ \frac{\partial p}{\partial t}(\omega,t)&=&-\frac{\partial H_t}{\partial q}(q(\omega,t),p(\omega,t)).
 \end{eqnarray*}
Since a precise formulation of the statement as well as of the proof would require some substantial extra theoretical background, we decided to focus on the convergence of probability distributions. Using the convergence result in \Cref{main} combined again with nonstandard analysis methods, the aforementioned solutions however can again be obtained by replacing the natural numbers $n$ in the sequences of closed random walk Hamiltonian orbits by a suitable "unlimited" (hyper)natural number $H$ as in \Cref{Wiener_process}.
\end{remark}
\indent Let $n\in\NN$ be arbitrary. Instead of looking for closed random walk Hamiltonian orbits $u_n=(q_n,p_n):\Omega_n^d\times [0,1]\to T^*\TT^d$ in the sense of \Cref{stoch_sol}, we make use of the fact that we can equally well look for random Hamiltonian orbits  $\bar{u}_n=(\bar{q}_n,p_n):\Omega_n^d\times [0,1]\to T^*\TT^d$ with boundary condition $(\bar{q}_n,p_n)(\omega_n,1)=\phi^{\omega_n}_1( (\bar{q}_n,p_n)(\omega_n,0))$ for the $\omega_n$-dependent symplectic flow \[\phi^{\omega_n}_t: T^*\TT^d\to T^*\TT^d,\,\,\phi^{\omega_n}_t(q,p)=(q-\sigma\cdot W_n(\omega_n,t),p),\] solving the Hamiltonian equation 
\begin{equation}\label{G-eq} \frac{\partial\bar{q}_n}{\partial t}(\omega_n,t)=\frac{\partial K^{\omega_n}_t}{\partial p}(\bar{u}_n(\omega_n,t)),\,\,\frac{\partial p_n}{\partial t}(\omega_n,t)=-\frac{\partial K^{\omega_n}_t}{\partial q}(\bar{u}_n(\omega_n,t))\end{equation} for the $\omega_n$-dependent time-dependent Hamiltonian \[K^{\omega_n}_t=H_t\circ(\phi^{\omega_n}_t)^{-1}:T^*\TT^d\to\RR,\,\, K^{\omega_n}_t(\bar{q},p)=H_t(\bar{q}+\sigma\cdot W_n(\omega_n,t),p)\] with $K^{\omega_n}_{t+1}=K^{\omega_n}_t\circ(\phi^{\omega_n}_1)^{-1}$. Here the relation between $q_n$ and $\bar{q}_n$ is given by \[\bar{q}_n(\omega_n,t)=q_n(\omega_n,t)-\sigma\cdot W_n(\omega_n,t).\] Note that, as in classical Hamilton theory, the random Hamiltonian orbits $\bar{u}_n:\Omega_n^d\times [0,1]\to T^*\TT^d$ are precisely the critical points of the random symplectic action 
\[\EE\int_0^1 \left(p_n(\omega_n,t)\partial_t\bar{q}_n(\omega_n,t)-K^{\omega_n}_t(\bar{q}_n(\omega_n,t),p_n(\omega_n,t))\right)\,dt \] on the space of paths $(\bar{q}_n,p_n):\Omega_n^d\times [0,1]\to T^*\TT^d$ satisfying the $\omega_n$-dependent boundary condition $(\bar{q}_n,p_n)(\omega_n,1)=\phi^{\omega_n}_1((\bar{q}_n,p_n)(\omega_n,0))$, 
where $\EE$ denotes the expectation value with respect to the counting measure $\nu_n$ on $\Omega_n^d=\FF_2^{n\cdot d}$. As in the non-random setting, the $d+1$ different random Hamiltonian orbits as claimed in \Cref{main} are distinguished by their random symplectic action, by studying $L^2$-gradient flow lines of this symplectic action, also called Floer curves, see below.\\
\indent While $W_n(\omega_n,\cdot)$ is only continuous, each of the Hamiltonian orbits $\bar{u}_n(\omega_n,\cdot)$ can be assumed to be differentiable for each $\omega_n\in\Omega_n^d$. Hence every sequence of Hamiltonian orbits $\bar{u}=(\bar{u}_n)_{n\in\NN}$ defines a sequence of Dirac measures $(\bar{\mu}_n^{\bar{u}})_{n\in\NN}$ by setting $\bar{\mu}_n^{\bar{u}}:=\nu_n\circ \bar{u}_n^{-1}$ on $C^1([0,1],T^*\TT^d)$ for every $n\in\NN$. Apart from the existence result for sequences of random Hamiltonian orbits $\bar{u}=(\bar{u}_n)_{n\in\NN}$, the other main finding is that there is a subsequence that converges in distribution, that is, after passing to a suitable subsequence, the sequence of Borel measures $(\bar{\mu}_n^{\bar{u}})_{n\in\NN}$ converges to a limiting Borel measure $\bar{\mu}^{\bar{u}}$ on $C^1([0,1],T^*\TT^d)$. Using the continuous map $C^1([0,1],T^*\TT^d)\times C^0([0,1],\TT^d)\to C^0([0,1],T^*\TT^d)$, $(\bar{u},W)\mapsto \bar{u}+(\sigma\cdot W,0)$, the Borel measure $\mu^u$, obtained via pushforward of the Borel measure $\bar{\mu}^{\bar{u}}\otimes\mu$ ($\mu$ = Wiener measure), is the limit of the Dirac measures $\mu_n^u=\nu_n\circ u_n^{-1}$ given by $u=(u_n)_{n\in\NN}$. Since the random symplectic action 
\begin{eqnarray*} &&\EE\int_0^1 \left(p_n(\omega_n,t)\partial_t\bar{q}_n(\omega_n,t)-K^{\omega_n}_t(\bar{q}_n(\omega_n,t),p_n(\omega_n,t))\right)\,dt =\\&&\EE\int_0^1 \left(p_n(\omega_n,t)\partial_t\bar{q}_n(\omega_n,t)-H_t(q_n(\omega_n,t),p_n(\omega_n,t))\right)\,dt\end{eqnarray*} 
can be written as 
\begin{eqnarray*}
&&\int_{C^1}\int_0^1 p(t)\partial_t\bar{q}(t)\,dt\,d\bar{\mu}_n^{\bar{u}} - \int_{C^0} \int_0^1 H_t(q(t),p(t))\,dt\,d\mu_n^u =\\
&&\int_{C^0}\int_0^1 \left(p(t)\frac{\partial H_t}{\partial p}(q(t),p(t))- H_t(q(t),p(t))\right)\,dt\,d\mu_n^u =\\
&&\int_0^1\int_{T^*\TT^d} \left(p\frac{\partial H_t}{\partial p}(q,p)- H_t(q,p)\right)\,\rho^u_n(t,q,p) \,dp\,dq \,dt,
\end{eqnarray*} using the integral over all paths $(\bar{q},p)$ in $C^1([0,1],T^*\TT^d)$ equipped with the Dirac measure $\bar{\mu}^{\bar{u}}_n$ and over all paths $(q,p)$ in $C^0([0,1],T^*\TT^d)$ equipped with the Dirac measure $\mu^u_n$, respectively, after passing to the subsequence as above, the random symplectic actions converge to \
\begin{eqnarray*}
&&\int_{C^1}\int_0^1p(t)\partial_t\bar{q}(t)\,dt\,d\bar{\mu}^{\bar{u}}-\int_{C^0}\int_0^1 H_t(q(t),p(t))\,dt\,d\mu^u =\\
&&\int_{C^0}\int_0^1 \left(p(t)\frac{\partial H_t}{\partial p}(q(t),p(t))- H_t(q(t),p(t))\right)\,dt\,d\mu^u =\\
&&\int_0^1\int_{T^*\TT^d} \left(p\frac{\partial H_t}{\partial p}(q,p)- H_t(q,p)\right)\,\rho^u(t,q,p) \,dp\,dq \,dt,
\end{eqnarray*}
where the Dirac measures $\bar{\mu}^{\bar{u}}_n$, $\mu^u_n$, $\rho^u_n$ are replaced by the limiting Borel measure $\bar{\mu}^{\bar{u}}$, $\mu^u$, $\rho^u$ respectively. In order to show that the limiting Borel measures obtained from the $d+1$ different sequences of time-periodic random walk Hamiltonian orbits still can be distinguished using their symplectic actions, we show that the Floer curves used to distinguish the $d+1$ sequences of random walk Hamiltonian orbits converge as well, possibly after passing to a further subsequence, in a distributional Gromov-Floer sense.\\
\indent Apart from the use of fractional Sobolev spaces, the main technical input that we use is the following well-known result about the regularity of sample paths of Brownian motion, see (\cite{MY}, corollary 1.20).
\begin{proposition}\label{regularity}
With respect to the Wiener measure the following holds true: For every $0<\alpha<\frac{1}{2}$, a path $\omega\in\Omega^d=C^0(\RR,\RR^d)$ is almost surely H\"older continuous with H\"older exponent $\alpha$, i.e., there exists $c_\alpha>0$ such that $|\omega(t_2)-\omega(t_1)|\leq c_\alpha\cdot |t_2-t_1|^\alpha$ for every $t_1,t_2\in\RR$. In other words, the corresponding subspace of H\"older continuous functions has full Wiener measure.
\end{proposition}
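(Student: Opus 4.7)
The plan is to prove the statement via Kolmogorov's continuity theorem, reducing the H\"older regularity claim to a moment bound on increments of the Wiener process followed by a Borel--Cantelli / chaining argument on a dyadic grid. Since the Wiener measure $\mu$ is the weak limit of the pushforward measures $\mu_n=\nu_n\circ W_n^{-1}$ constructed in \Cref{section 1}, the finite-dimensional marginals of $\mu$ are Gaussian: for each pair $t_1<t_2$ the increment $\omega(t_2)-\omega(t_1)$ has distribution $\mathcal{N}(0,|t_2-t_1|)$, whence
\[
\int_{\Omega^d}|\omega(t_2)-\omega(t_1)|^{2k}\,d\mu \,=\, c_k\cdot |t_2-t_1|^k
\]
for every integer $k\geq 1$ and an explicit constant $c_k>0$. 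At the discrete level this is the statement $\EE|W_n(\cdot,t_2)-W_n(\cdot,t_1)|^{2k}\leq c_k|t_2-t_1|^k$, provable by Hoeffding-type bounds on sums of the independent signs $\epsilon_i$; the passage to the limit requires a brief uniform-integrability check along $\mu_n$.

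Fix $\alpha\in(0,\tfrac{1}{2})$ and choose $k$ large enough that $k(1-2\alpha)>1$. Working first on $[0,1]$ with the dyadic grid $t_{n,j}=j\cdot 2^{-n}$, $0\leq j<2^n$, Markov's inequality and the above moment bound yield
\[
\mu\bigl(\{|\omega(t_{n,j+1})-\omega(t_{n,j})|>2^{-\alpha n}\}\bigr)\,\leq\, c_k\cdot 2^{-(1-2\alpha)kn},
\]
and a union bound over the $2^n$ values of $j$ gives $\mu(\max_j\cdots>2^{-\alpha n})\leq c_k\cdot 2^{n(1-k(1-2\alpha))}$. By the choice of $k$ this is summable in $n$, so the Borel--Cantelli lemma furnishes, for $\mu$-almost every $\omega$, an integer $N_0=N_0(\omega)$ such that $|\omega(t_{n,j+1})-\omega(t_{n,j})|\leq 2^{-\alpha n}$ for all $n\geq N_0$ and all $j$.

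The remaining step is a chaining argument that extends this bound from dyadic neighbors to arbitrary $s<t$ in $[0,1]$. For $|t-s|<2^{-N_0}$, let $n_0$ be the largest integer with $2^{-n_0}\geq |t-s|$; representing $s$ and $t$ by (truncated) binary expansions and telescoping increments at scales $n\geq n_0$ produces
\[
|\omega(t)-\omega(s)|\,\leq\, 2\sum_{n\geq n_0}2^{-\alpha n}\,\leq\, C_\alpha\cdot |t-s|^\alpha,
\]
while pairs with $|t-s|\geq 2^{-N_0(\omega)}$ are absorbed by enlarging the H\"older constant $c_\alpha$ (which is now finite for $\mu$-a.e.\ $\omega$). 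Applying the same argument on every unit interval $[m,m+1]$, $m\in\ZZ$ — or equivalently invoking the scaling identity $\omega(\lambda t)\stackrel{d}{=}\lambda^{1/2}\omega(t)$ — and intersecting the resulting countable family of full-measure sets extends the conclusion from $[0,1]$ to all of $\RR$.

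The main obstacle will be the chaining bookkeeping: making sure the geometric summation is carried out uniformly in $\omega$ so that the H\"older constant $c_\alpha$ has only the announced dependence on $\alpha$ and on the $\omega$-dependent threshold $N_0(\omega)$, without any hidden dependence on the scale $n_0$. A secondary but necessary subtlety is the justification that the limiting increments of $\mu$ really are Gaussian with the correct variance; this follows immediately from the multivariate central limit theorem applied to $W_n$, which in turn is a direct consequence of the one-dimensional CLT already invoked in \Cref{section 1} together with the stochastic independence of the coordinate projections $\epsilon_i$.
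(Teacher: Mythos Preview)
The paper does not actually prove this proposition: it is stated as a well-known fact and attributed to (\cite{MY}, corollary~1.20), with no argument given. Your proposal supplies the standard proof via Kolmogorov's continuity theorem (Gaussian moment bounds, Markov plus Borel--Cantelli on the dyadic grid, then chaining), which is essentially the argument found in the cited reference, so in that sense your approach matches what the paper defers to.

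One genuine caveat concerns your final extension step. Intersecting full-measure sets over the intervals $[m,m+1]$ gives, for $\mu$-almost every $\omega$, local H\"older continuity on every compact set, but the H\"older constants on the individual intervals are random and almost surely unbounded in $m$ (e.g.\ by the law of the iterated logarithm, which forces $|\omega(t)|$ to grow like $\sqrt{2t\log\log t}$). Thus the statement as literally written---a single finite $c_\alpha$ valid for all $t_1,t_2\in\RR$---fails for two-sided Brownian motion, and your scaling identity $\omega(\lambda t)\stackrel{d}{=}\lambda^{1/2}\omega(t)$ in fact confirms this rather than repairing it. This is a slip in the paper's formulation rather than in your argument: the Wiener measure in the paper is set up on $C^0([0,1],\RR^d)$, and the proposition is only ever applied on $[0,1]$ (in the proofs of the two tightness lemmas), where your proof is correct as written.
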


\section{Hamiltonian Floer theory with diffusion}
The proof consists of the following steps, where for each $\omega_n\in\Omega_n^d$, $n\in\NN$ we set $q_n^{\omega_n}(t):=q_n(\omega_n,t)$ and $p_n^{\omega_n}(t):=p_n(\omega_n,t)$, where we refer to \cite{AD}, \cite{AH}, \cite{Sch} for more details on the underlying Hamiltonian Floer theory and \cite{DS} for the necessary modifications in the case of boundary conditions twisted by a symplectomorphism. \\

\noindent\emph{The case $F_t\equiv 0$:} In this case the diffusive Hamiltonian equations simplify to \[q_n^{\omega_n}(t)-q_n^{\omega_n}(0)=\int_0^t p_n^{\omega_n}(\tau)\,d\tau +\sigma\cdot W_n(\omega_n,t),\,\,\frac{\partial p_n^{\omega_n}}{\partial t}(t)=0,\] which is equivalent to \[p_n^{\omega_n}(t)=p_n^{\omega_n}(0),\,\, q_n^{\omega_n}(t)=q_n^{\omega_n}(0)+t\cdot p_n^{\omega_n}(0)+\sigma\cdot W_n(\omega_n,t),\,\,t\in [0,1].\] Furthermore $(q_n^{\omega_n},p_n^{\omega_n}):[0,1]\to T^*\TT^d$ satisfies the boundary condition $(q_n^{\omega_n},p_n^{\omega_n})(1)=(q_n^{\omega_n},p_n^{\omega_n})(0)$ precisely when \[p_n^{\omega_n}(0)-\sigma\cdot W_n(\omega_n,1)\in\ZZ^d,\,\,q_n^{\omega_n}(0)\in\TT^d\,\,\textrm{arbitrary.}\] Also taking into account that we are interested in contractible solutions, we arrive at \[p_n^{\omega_n}(0)=\sigma\cdot W_n(\omega_n,1),\,\,q_n^{\omega_n}(0)\in\TT^d\,\,\textrm{arbitrary.}\]
In particular, we note that there is an entire manifold of stochastic time-dependent solutions satisfying the boundary condition, and the cuplength estimates holds in the Morse-Bott sense, i.e., after adding a sufficiently small Morse function on $\TT^d$.\\

\noindent\emph{Moduli spaces of Floer curves:} 
In order to prove the existence of $d+1$ (= cuplength of $\TT^d$) different solutions $(\bar{q}_n^{\omega_n},p_n^{\omega_n}):[0,1]\to T^*\TT^d$ using Hamiltonian Floer theory, we now follow the standard strategy, see e.g. \cite{AH}, \cite{Sch}; since the details are standard as well, we only outline the key steps.\\
\indent Since the Hamiltonian $K_t^{\omega_n}=K^0+G_t^{\omega_n}$ with $K^0(q,p)=\frac{1}{2}|p|^2$, $G_t^{\omega_n}=F_t\circ(\phi^{\omega_n}_t)^{-1}$ as well as the boundary condition $(\bar{q}_n^{\omega_n},p_n^{\omega_n})(1)=\phi^{\omega_n}_1((\bar{q}_n^{\omega_n},p_n^{\omega_n})(0))$ are depending on the paths in Wiener space, we introduce for every $\omega_n\in\Omega_n^d$, $n\in\NN$ a corresponding moduli space $\mathcal{M}_n^{\omega_n}=\mathcal{M}_n^{\omega_n}(F,\sigma)$ of Floer curves. In order to be able to employ a maximum principle for proving compactness for moduli spaces of Floer curves, we start with the following standard auxiliary result. 
\begin{lemma}\label{cut-off}
There exists $R>0$ depending on $|W_n(\omega_n,\cdot)|$ such that $K_t^{\omega_n}(q,p)=\frac{1}{2}|p|^2+G_t^{\omega_n}(q,p)$ and $\bar{K}_t^{\omega_n}(q,p)=\frac{1}{2}|p|^2+\bar{G}_t^{\omega_n}(q,p)$, $\bar{G}_t^{\omega_n}(q,p)=\chi_R(|p|)\cdot G_t^{\omega_n}(q,p)$ with the cut-off function $\chi_R:[0,\infty)\to\RR$, $\chi_R(s)=1$ for $s\leq R$, $\chi_R(s)=0$ for $s\geq R+1$, have the same Hamiltonian orbits with symplectic action $\leq \frac{1}{2}(\sigma\cdot W_n(\omega_n,1))^2+4\|F\|_{C^1}$. 
\end{lemma}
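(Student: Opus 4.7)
The strategy is the standard action--energy bootstrap: show that any Hamiltonian orbit of $K^{\omega_n}$ satisfying the twisted boundary condition and the given action bound has its $p$-component uniformly bounded by some $R_0=R_0(|W_n(\omega_n,\cdot)|,\|F\|_{C^1})$. Choosing $R>R_0$ then forces $\chi_R\equiv 1$ along any such orbit, so that $\bar K^{\omega_n}$ agrees with $K^{\omega_n}$ in an open neighborhood of its image and the two Hamiltonians have the same orbits, with the same symplectic action, inside the prescribed action window.

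The first observation is that $G_t^{\omega_n}=F_t\circ(\phi_t^{\omega_n})^{-1}$ has the same $C^1$-norm as $F_t$ since $\phi_t^{\omega_n}$ is only a translation in the $q$-direction. Writing out Hamilton's equations for $K^{\omega_n}$, the $p$-component satisfies $\dot p_n=-\partial_{\bar q}G_t^{\omega_n}$, so $|\dot p_n(t)|\leq\|F\|_{C^1}$ pointwise and in particular $|p_n(t_1)-p_n(t_2)|\leq\|F\|_{C^1}$ for all $t_1,t_2\in[0,1]$. Next, substituting $\dot{\bar q}_n=p_n+\partial_p G_t^{\omega_n}$ into the symplectic action gives
\[\mathcal{A}(\bar u_n)=\int_0^1\left(\tfrac{1}{2}|p_n|^2+p_n\cdot\partial_p G_t^{\omega_n}-G_t^{\omega_n}\right)dt\ \geq\ \tfrac{1}{2}\|p_n\|_{L^2}^2-\|F\|_{C^1}\|p_n\|_{L^2}-\|F\|_{C^1}.\]
Combined with the assumed upper bound on $\mathcal{A}(\bar u_n)$, this quadratic inequality produces an explicit $L^2$-bound $\|p_n\|_{L^2}\leq C$ depending only on $|W_n(\omega_n,1)|$ and $\|F\|_{C^1}$. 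Since some $t_\ast\in[0,1]$ satisfies $|p_n(t_\ast)|\leq\|p_n\|_{L^2}$, the Lipschitz bound from the first step promotes this to the pointwise bound $|p_n(t)|\leq C+\|F\|_{C^1}=:R_0$.

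With $R>R_0$, every $K^{\omega_n}$-orbit with action at most $\tfrac{1}{2}(\sigma\cdot W_n(\omega_n,1))^2+4\|F\|_{C^1}$ lies entirely inside $\{|p|<R\}$, where $\bar K^{\omega_n}=K^{\omega_n}$, so it is automatically also a $\bar K^{\omega_n}$-orbit with the same action. For the converse inclusion, one repeats the argument for $\bar K^{\omega_n}$: the bound $|\partial_{\bar q}\bar G_t^{\omega_n}|=\chi_R(|p|)\,|\partial_{\bar q}G_t^{\omega_n}|\leq\|F\|_{C^1}$ still yields Lipschitz control on $p_n$, and the only new ingredient in the analogous action--energy estimate is a bounded multiplicative contribution from $|\chi_R'|$ on the transition annulus $\{R\leq|p|\leq R+1\}$. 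Fixing once and for all a cut-off $\chi_R$ with $|\chi_R'|\leq 2$ and enlarging $R$ if necessary to absorb this constant, one obtains the same sort of $L^\infty$-bound on $p_n$ for $\bar K^{\omega_n}$-orbits, forcing them to live where $\bar K^{\omega_n}=K^{\omega_n}$ as well.

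The only mildly nontrivial step is the conversion of the upper action bound into an $L^\infty$-bound on $p_n$; this is the place where the quadratic kinetic term $\tfrac{1}{2}|p|^2$ and the finiteness of $\|F\|_{C^1}$ enter decisively, and it is the analytic core underpinning the subsequent compactness and maximum-principle arguments for the Floer moduli spaces $\mathcal{M}_n^{\omega_n}$.
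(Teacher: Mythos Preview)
Your argument is correct and follows essentially the paper's strategy: express the action of an orbit as $\int_0^1(\tfrac{1}{2}|p|^2+p\cdot\partial_p G_t^{\omega_n}-G_t^{\omega_n})\,dt$, use the $C^1$-bound on $F$ to extract an $L^2$-bound on $p$ from the action bound, and then upgrade to an $L^\infty$-bound via the Hamilton equation $\dot p=-\partial_{\bar q}G_t^{\omega_n}$. The only cosmetic differences are that the paper phrases the last step as a $W^{1,2}\hookrightarrow C^0$ Sobolev embedding rather than your mean-value-plus-Lipschitz argument, and that you are more explicit than the paper about the converse inclusion for $\bar K^{\omega_n}$-orbits.
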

\begin{proof}
We start by noting that the dependence on $|W_n(\omega_n,1)|$ is directly related to the given bound on the symplectic action. Since the symplectic action of a Hamiltonian orbit $\bar{u}=(\bar{q},p)$ of $K_t^{\omega_n}$ is given by \[\int_0^1 \left(\frac{1}{2}p(t)^2+p(t)\frac{\partial G_t^{\omega_n}}{\partial p}(\bar{q}(t),p(t))-G_t^{\omega_n}(\bar{q}(t),p(t))\right)\,dt, \] it follows from the fact that $F_t$ and hence $G_t^{\omega_n}$ has bounded $C^1$-norm that the symplectic action grows quadratically with the $L^2$-norm of the $p$-component of the Hamiltonian orbit $(\bar{q}(t),p(t))$. With the bound on the symplectic action in place, it follows that we get a bound on this $L^2$-norm. Using the Hamiltonian equation we get a bound on the Sobolev $W^{1,2}$-norm which in turn leads to a bound of the $p$-component in the supremum norm.      
\end{proof}
\indent For every $1\leq j\leq d$ consider the submanifold $C_j=\TT^{j-1}\times\{0\}\times\TT^{d-j+1}\subset\TT^d$. Using the intersection product of homology classes, note that $[C_1]\cdot\ldots\cdot [C_d]=[\textrm{point}]$, or equivalently, $\textrm{PD}[C_1]\cup\ldots\cup\textrm{PD}[C_d]$ is equal to the canonical volume form on $\TT^d$. Further we introduce, smoothly depending on $\tau>0$, the smooth cut-off function $\varphi_{\tau}:\RR\to [0,1]$ with $\varphi_{\tau}=0$ for $\tau=0$ and $\varphi_{\tau}(s)=1$ for $0<s<\tau$ and $\varphi_{\tau}(s)=0$ for $s<-1$ and $s>\tau+1$ for $\tau >0$ large. Then for every $n\in\NN$ the $\omega_n$-dependent moduli space $\mathcal{M}_n^{\omega_n}$ is defined as the set
\[\mathcal{M}_n^{\omega_n}=\bigcup_{\tau>0}\mathcal{M}_{n,\tau}^{\omega_n},\,\,\mathcal{M}_{n,\tau}^{\omega_n}=\{\widetilde{u}:\RR\times [0,1]\to T^*\TT^d: (*1),(*2),(*3),(*4)\}\] of Floer curves satisfying the $\omega_n$-dependent Floer equation 
\[(*1):\,\, \overline{\partial}^{\omega_n,\tau}_{J,\bar{K}}(\tilde{u})\,:=\,\partial_s\tilde{u}+J_t(\tilde{u})\partial_t\tilde{u}+\nabla K^0(\tilde{u})+\varphi_{\tau}(s)\cdot\nabla \bar{G}^{\omega_n}_t(\tilde{u})=0,\]
the boundary condition \[(*2):\,\,\widetilde{u}(s,1)=\phi^{\omega_n}_1(\widetilde{u}(s,0))\,\,\textrm{for every}\,\,s\in\RR,\] 
and, for $\widetilde{u}=(\widetilde{q},\widetilde{p})$, the asymptotic condition \[(*3):\,\,\widetilde{p}(s,t)\to \sigma\cdot W_n(\omega_n,1), \,\,\widetilde{q}(s,t)\to q+ t\cdot \sigma\cdot W_n(\omega_n,1)\,\textrm{for some}\,q\in\TT^d, \] that is, the Floer curve converges to a solution for the case $F_t=0$ as $s\to\pm\infty$. Here $J_t$ denotes a family of almost complex structure on $T^*\TT^d$ satisfying the periodicity condition $(\phi^{\omega_n}_1)^*J_{t+1}=J_t$. Finally we demand the intersection property \[(*4): \tilde{q}\left(\frac{j}{d}\cdot\tau,0\right)\in C_j\,\,\textrm{for every}\,\,j=1,\ldots,d.\]
\indent In order to prove that $\mathcal{M}^{\omega_n}_n$ carries the structure of a one-dimensional manifold one uses that for every $\tau>0$ the submoduli space $\mathcal{M}^{\omega_n}_{n,\tau}$ is the zero set of the nonlinear Floer operator $\overline{\partial}^{\omega_n,\tau}_{J,\bar{K}}$, viewed as a section in the Banach space bundle $\mathcal{E}^{k,p}_{\omega_n}$ over the Banach manifold $\mathcal{B}^{k+1,p}_{\omega_n}$ with $k=0,1,2,\ldots$ and $p>2$. Here $\mathcal{B}^{k+1,p}_{\omega_n}$ consists of $W^{k+1,p}_{\textrm{loc}}$-maps $\widetilde{u}:\RR\times [0,1]\to T^*\TT^d$ satisfying $(*2)$, $(*3)$, $(*4)$, while the fibre $\mathcal{E}^{k,p}_{\omega_n,\widetilde{u}}$ over $\widetilde{u}\in\mathcal{B}^{k+1,p}_{\omega_n}$ is the linear Banach space $W^{k,p}_{\phi^{\omega_n}_1}(\RR\times [0,1],\RR^{2d})$ of $W^{k,p}$-maps satisfying $(*2)$. In order to prove that $\overline{\partial}^{\omega_n,\tau}_{\bar{K}}$ defines a nonlinear Fredholm operator, one shows that the linearization $D_{\widetilde{u}}: T_{\widetilde{u}}\mathcal{B}^{k+1,p}_{\omega_n}\to \mathcal{E}^{k,p}_{\omega_n,\widetilde{u}}$ is a linear Fredholm operator for every $\widetilde{u}\in\mathcal{M}^{\omega_n}_{n,\tau}$. \\ \indent One of the main ingredients is to show that the gradient $\widetilde{u}\mapsto\nabla {\bar{K}}^{\omega}_t(\widetilde{u})$ defines a bounded linear map from $T_{\widetilde{u}}\mathcal{B}^{k,p}_{\omega_n}$ into $\mathcal{E}^{k,p}_{\omega_n,\widetilde{u}}$, that is, from some $W^{k,p}$-space into another $W^{k,p}$-space.  Since $W_n(\omega_n,\cdot)$ is Lipschitz continuous and hence an element of $W^{1,\infty}([0,1],\RR^d)$, the space of H\"older continuous functions with H\"older exponent $1$, using $K^{\omega_n}_t(\bar{q},p)=H_t(\bar{q}+\sigma\cdot W_n(\omega_n,t),p)$ and the embedding of $W^{1,\infty}$ into $W^{1,p}$ it follows that $\nabla K^{\omega_n}_t$ and hence $\nabla \bar{K}^{\omega_n}_t$ defines a bounded linear map $T_{\widetilde{u}}\mathcal{B}^{k,p}_{\omega_n}$ into $\mathcal{E}^{k,p}_{\omega_n,\widetilde{u}}$ for $k=0,1$. Summarizing we find that $\mathcal{M}^{\omega_n}_n$ is a subset of the Banach manifold $\mathcal{B}^{k+1,p}_{\omega_n}$ with $k=0,1$ and $p>2$, in particular, we get that each $\widetilde{u}$ is an $W^{2,p}$-map and hence at least $C^1$, i.e., differentiable in the classical sense. \\
\indent Now a standard transversality argument shows, possibly after slightly perturbing the family of almost complex structures $J_t$, that $\mathcal{M}_n^{\omega_n}$ is a one-dimensional manifold which is non-empty, since for $\tau=0$ the moduli subspace $\mathcal{M}_n^{\omega_n,0}$ contains precisely one element. Since we employ the cut-off Hamiltonian $\bar{G}_t^{\omega_n}(q,p)=\chi_R(|p|)\cdot G_t^{\omega_n}(q,p)$ instead of the original Hamiltonian $G_t^{\omega_n}$, it follows that the standard $C^0$-bounds for Floer curves in cotangent bundles are available, see e.g. \cite{C}, which in turn implies that also the standard Gromov-Floer compactness is in place. Note that for the latter we use that the energy is uniformly bounded by twice the Hofer norm of $F$, see (\cite{MDSa}, proposition 9.1.4), which in turn is bounded by twice the $C^1$-norm of $F$. Hence it follows that $\mathcal{M}_n^{\omega_n,\tau}$ is non-empty for every $\tau>0$ as $\mathcal{M}_n^{\omega_n}$ is compact up to breaking of cylinders. Compactifying the moduli space $\mathcal{M}_n^{\omega_n}$ in the Gromov-Floer sense, we find in the limit $\tau\to\infty$ for every $j=1,\ldots,d$ a Floer map $\widetilde{u}^{\omega_n,j}_n=(\widetilde{q}^{\omega_n,j}_n,\widetilde{p}^{\omega_n,j}_n):\RR\times [0,1]\to T^*\TT^d$ satisfying the boundary condition $(*2)$ and the other conditions $(*1)$, $(*3)$, and $(*4)$ being replaced by 
\[(*1'):\,\, \overline{\partial}^{\omega_n}_{J,\bar{K}}(\widetilde{u}^{\omega_n,j}_n)\,:=\,\partial_s\widetilde{u}^{\omega_n,j}_n+J_t(\widetilde{u}^{\omega_n,j}_n)\partial_t\widetilde{u}^{\omega_n,j}_n+\nabla \bar{K}^{\omega_n}_t(\widetilde{u}^{\omega_n,j}_n)=0,\]
\[(*3'):\,\,\widetilde{u}^{\omega_n,j}_n(s_{k,\pm},\cdot)\to(\bar{q}^{\omega_n,j}_{n,\pm},p^{\omega_n,j}_{n,\pm})\,\,\textrm{for sequences}\,\,s_{k,\pm}\to\pm\infty\,\,\textrm{as}\,\,k\to\infty ,\]
\[(*4'): \widetilde{q}^{\omega_n,j}_n(0,0)\in C_j.\]
Here $(\bar{q}^{\omega_n,j}_{n,\pm},p^{\omega_n,j}_{n,\pm}):[0,1]\to T^*\TT^d$ is a solution of the Hamiltonian equation for the Hamiltonian $\bar{K}^{\omega_n}_t$ with boundary condition $(\bar{q}^{\omega_n,j}_{n,\pm}(1),p^{\omega_n,j}_{n,\pm}(1))=\phi^{\omega_n}_1(\bar{q}^{\omega_n,j}_{n,\pm}(0),p^{\omega_n,j}_{n,\pm}(0))$. From $(*4')$ it follows that $(\bar{q}^{\omega_n,j}_{n,-},p^{\omega_n,j}_{n,-})$ and $(\bar{q}^{\omega_n,j}_{n,+},p^{\omega_n,j}_{n,+})$ can be distinguished by their symplectic action given by \[\mathcal{L}_n^{\omega_n}(\bar{q}^{\omega_n,j}_{n,\pm},p^{\omega_n,j}_{n,\pm})=\int_0^1 \left( d\bar{K}^{\omega_n}_t\cdot p\frac{\partial}{\partial p}-\bar{K}^{\omega_n}_t\right)(\bar{q}^{\omega_n,j}_{n,\pm}(t),p^{\omega_n,j}_{n,\pm}(t))\,dt\] 
\indent Due to the asymptotic condition $(*3)$ in the definition of the moduli spaces $\mathcal{M}_n^{\omega_n}$, it follows from (\cite{MDSa}, proposition 9.1.4) that the above actions differ from the symplectic action $\frac{1}{2}(\sigma\cdot W_n(\omega_n,1))^2$ of the asymptotic orbit for $F_t\equiv 0$ by at most twice the Hofer norm $|||F|||$. Since the latter is bounded by twice the $C^1$-norm of $F$, it follows from our choice of auxiliary Hamiltonian in \Cref{cut-off} that $(\bar{q}^{\omega_n,j}_{n,-},p^{\omega_n,j}_{-,n})$ and $(\bar{q}^{\omega_n,j}_{+,n},p^{\omega_n,j}_{+,n})$ are indeed Hamiltonian orbits for the original Hamiltonian $K_t^{\omega_n}$ with $K_t^{\omega_n}(q,p)=\frac{1}{2}|p|^2+G_t^{\omega_n}(q,p)$ and we have \[\mathcal{L}_n^{\omega_n}(\bar{q}^{\omega_n,j}_{n,\pm},p^{\omega_n,j}_{n,\pm})=\int_0^1 \left( dK^{\omega_n}_t\cdot p\frac{\partial}{\partial p}-K^{\omega_n}_t\right)(\bar{q}^{\omega_n,j}_{n,\pm}(t),p^{\omega_n,j}_{n,\pm}(t))\,dt\]
More precisely we have \[\mathcal{L}_n^{\omega_n}(\bar{q}^{\omega_n,1}_{n,-},p^{\omega_n,1}_{n,-})<\mathcal{L}_n^{\omega_n}(\bar{q}^{\omega_n,1}_{n,+},p^{\omega_n,1}_{n,+})\leq \mathcal{L}_n^{\omega_n}(\bar{q}^{\omega_n,2}_{n,-},p^{\omega_n,2}_{n,-})<\ldots<\mathcal{L}_n^{\omega_n}(\bar{q}^{\omega_n,d}_{n,+},p^{\omega_n,d}_{n,+})\] which in turn implies that there at least $d+1$ different contractible solutions.\\
\\  
\noindent\emph{Tight family of measures and Gromov-Floer compactness:} It remains to show that the random Hamiltonian orbits $\bar{u}^j_{\pm}=(\bar{u}^j_{n,\pm})_{n\in\NN}$, with $\bar{u}^j_{n,\pm}(\omega_n,t)=\bar{u}^{\omega_n,j}_{n,\pm}(t)=(\bar{q}^{\omega_n,j}_{n,\pm}(t),p^{\omega_n,j}_{n,\pm}(t))$ for $(\omega_n,t)\in\Omega_n^d\times [0,1]$, converge in distribution as $n\to\infty$ in the sense that the corresponding Dirac measures $\bar{\mu}^{\bar{u}^j_\pm}_n$ converge, possibly after passing to a subsequence. Furthermore, in order to show that the resulting limiting Borel measures $\bar{\mu}^{\bar{u}^j_\pm}$ are different, we further show a corresponding statement for the families of Floer curves $\widetilde{u}^{\omega_n,j}_n$ connecting $\bar{u}^{\omega_n,j}_{n,-}$ and $\bar{u}^{\omega_n,j}_{n,+}$ for each $j=1,\ldots,d$.
\begin{lemma}\label{tight}
    For every $\epsilon>0$ there exists a compact subset $C^1_{\epsilon}$ of $C^1([0,1],T^*\TT^d)$ such that for each $j=1,\ldots,d$ we have $\bar{\mu}^{\bar{u}^j_\pm}_n(C^1_{\epsilon})\geq 1-\epsilon$ for $n$ sufficiently large. In particular, after passing to a subsequence, $\bar{\mu}^{\bar{u}^j_\pm}_n$ converges to some Borel measure $\bar{\mu}^{\bar{u}^j_\pm}$ on $C^1([0,1],T^*\TT^d)$ as $n\to\infty$. 
\end{lemma}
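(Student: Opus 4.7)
The plan is to exploit the Hölder regularity of Brownian sample paths (Proposition \ref{regularity}) and the uniform version of it that holds for the approximating random walks $W_n$, in order to force $\bar{u}^{\omega_n,j}_{n,\pm}$ into a $C^{1,\alpha}$-ball compactly embedded in $C^1$, for a set of $\omega_n\in\Omega_n^d$ of $\nu_n$-probability at least $1-\epsilon$; Prokhorov's theorem will then deliver the convergent subsequence.

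First I fix $0<\alpha<\frac{1}{2}$ and $\epsilon>0$. The argument behind Proposition \ref{regularity} only uses the moment bound $\EE|W(t)-W(s)|^{2k}\leq C_k|t-s|^k$, which holds for the random walks $W_n$ uniformly in $n$ by a direct calculation with sums of i.i.d.\ increments; the Kolmogorov continuity criterion then yields $\EE\|W_n(\cdot,\cdot)\|_{C^{0,\alpha}}^p\leq C_{\alpha,p}$ uniformly in $n$ and, via Markov's inequality, constants $M=M(\epsilon)$ and $n_0$ such that
\[A_\epsilon:=\{\omega_n\in\Omega_n^d:\|W_n(\omega_n,\cdot)\|_{C^{0,\alpha}}\leq M\}\quad\textrm{satisfies}\quad\nu_n(A_\epsilon)\geq 1-\epsilon\,\,\,\textrm{for all}\,n\geq n_0.\]

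Next I propagate this Hölder bound from $W_n$ to $\bar{u}^{\omega_n,j}_{n,\pm}$. For $\omega_n\in A_\epsilon$, Lemma \ref{cut-off} provides a cut-off radius $R=R(M,\|F\|_{C^1})$ yielding $\|p^{\omega_n,j}_{n,\pm}\|_{C^0}\leq R+1$. On the resulting compact slab $\TT^d\times\{|p|\leq R+1\}$, the smooth function $F$ has Lipschitz first derivatives, so Hamilton's equations for $\bar{K}^{\omega_n}_t(\bar{q},p)=\frac{1}{2}|p|^2+\chi_R(|p|)F_t(\bar{q}+\sigma W_n(\omega_n,t),p)$ first give a $C^0$-bound on $\partial_t\bar{u}$ and then the estimate
\[|\partial_t\bar{u}(t_2)-\partial_t\bar{u}(t_1)|\leq L(|t_2-t_1|+|W_n(\omega_n,t_2)-W_n(\omega_n,t_1)|)\leq L'|t_2-t_1|^\alpha,\]
with $L,L'$ depending only on $M$, $\sigma$, and suitable norms of $F$. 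Hence $\bar{u}^{\omega_n,j}_{n,\pm}$ lies in a ball $\mathcal{K}_\epsilon\subset C^{1,\alpha}([0,1],T^*\TT^d)$ whose radius is uniform in $n$ and in $\omega_n\in A_\epsilon$.

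Setting $C^1_\epsilon:=\mathcal{K}_\epsilon$, the compact embedding $C^{1,\alpha}\hookrightarrow C^1$ (Arzelà--Ascoli applied simultaneously to $\bar{u}$ and $\partial_t\bar{u}$) makes $C^1_\epsilon$ compact in $C^1([0,1],T^*\TT^d)$, and $\bar{\mu}^{\bar{u}^j_\pm}_n(C^1_\epsilon)\geq\nu_n(A_\epsilon)\geq 1-\epsilon$ for every $n\geq n_0$. Prokhorov's theorem then extracts the convergent subsequence. I expect the main obstacle to be the Hölder-propagation step: one must check that the Hölder regularity of $W_n$ genuinely transfers to $\partial_t\bar{u}\in C^{0,\alpha}$, for which the smoothness of $F$ (to Lipschitz-bound its first derivatives on the compact slab) is essential beyond the mere $C^1$-bound already used in Lemma \ref{cut-off}.
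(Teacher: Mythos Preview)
Your argument is correct and follows essentially the same architecture as the paper's proof: (i) secure a uniform-in-$n$ H\"older bound on $W_n(\omega_n,\cdot)$ on a set of $\nu_n$-probability at least $1-\epsilon$, (ii) propagate it through the Hamiltonian equation $\partial_t\bar{u}=J_t(\bar{u})^{-1}\nabla H_t(\bar{u}+(\sigma W_n,0))$ to a $C^{1,\alpha}$-bound on $\bar{u}^{\omega_n,j}_{n,\pm}$, and (iii) conclude via the compact embedding $C^{1,\alpha}\hookrightarrow C^1$ and Prokhorov. The one substantive difference is in step~(i): the paper fixes $\alpha=\tfrac14$ and deduces the high-probability H\"older ball from the weak convergence $\mu_n\to\mu$ combined with Proposition~\ref{regularity}, whereas you obtain it directly from uniform moment bounds on the $W_n$ and Kolmogorov's continuity criterion. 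Your route is a little more self-contained (it does not invoke the functional CLT) and avoids a mild Portmanteau issue in passing the measure of a closed H\"older ball from $\mu$ back to $\mu_n$; the paper's route is shorter given the machinery already in place. Your explicit observation that the propagation step needs Lipschitz first derivatives of $F$ on the compact slab---hence genuinely uses the smoothness of $F$ beyond the $C^1$-bound entering Lemma~\ref{cut-off}---is a point the paper leaves implicit.
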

Note that the first half of the statement can be rephrased as (asymptotical) tightness of the family of probability measures. Since tight families of probability measures are well-known to be compact, see e.g. (\cite{Bi}, theorem 25.10), the second half of the statement indeed follows from the first.
\begin{proof}
Let $\epsilon>0$ be arbitrary. By \Cref{regularity} we know that the space $W^{\frac{1}{4},\infty}([0,1],\TT^d)$ of Hölder continuous functions with Hölder exponent $\frac{1}{4}<\frac{1}{2}$ has full Wiener measure $\mu$. Denoting by $W^{\frac{1}{4},\infty}_B([0,1],\TT^d)$ the subspace of functions with $W^{\frac{1}{4},\infty}$-norm less than or equal to $B$, and using that $\mu$ is the limit of the Dirac measures $\mu_n=\nu_n\circ W_n^{-1}$, we find $B>0$ and $n_0\in\NN$ such that $\mu_n(W^{\frac{1}{4},\infty}_B([0,1],\TT^d))\geq 1-\epsilon$ for $n\geq n_0$. Now all that remains to be shown is that there exists $\bar{B}>0$ with the following property for all $n\in\NN$, $\omega_n\in\Omega_n^d$: If $W_n(\omega_n,\cdot)\in W^{\frac{1}{4},\infty}_B([0,1],\TT^d)$, then $\bar{u}^{\omega_n,j}_{n,\pm}\in C^1_{\epsilon}$ for $j=1,\ldots,d$ with the compact subset $C^1_{\epsilon}:=W^{1\frac{1}{4},\infty}_{\bar{B}}([0,1],T^*\TT^d)$ of all maps in $C^1([0,1],T^*\TT^d)$ with $W^{1\frac{1}{4},\infty}$-norm less than or equal to $\bar{B}$. As a first step we observe that, since the $W^{\frac{1}{4},\infty}$-norm of $W_n(\omega_n,\cdot)$ dominates its $C^0$-norm, it follows from \Cref{cut-off} that there is a bound for the $C^0$-norm of $\bar{u}^{\omega_n,j}_{n,\pm}$ which just depends on the chosen $B>0$. On the other hand, using $J_t(\bar{u}^{\omega_n,j}_{n,\pm})\partial_t\bar{u}^{\omega_n,j}_{n,\pm}+\nabla H_t(\bar{u}^{\omega_n,j}_{n,\pm}+(\sigma W_n(\omega_n,\cdot),0))=0$ it follows that the $W^{\frac{1}{4},\infty}$-norm of $\partial_t\bar{u}^{\omega_n,j}_{n,\pm}$ is uniformly bounded as well, again depending on $B>0$. 
\end{proof}
Recall that we have shown above for every $n\in\NN$, $\omega_n\in\Omega_n^d$ that the orbits $\bar{u}^{\omega_n,j}_{n,\pm}$ are pairwise different as they can be ordered via their symplectic action. Here the crucial strict inequality is that for each $j=1,\ldots,d$ we have $\mathcal{L}_n^{\omega_n}(\bar{q}^{\omega_n,j}_{n,-},p^{\omega_n,j}_{n,-})<\mathcal{L}_n^{\omega_n}(\bar{q}^{\omega_n,j}_{n,+},p^{\omega_n,j}_{n,+})$, which follows from the existence of the Floer map $\widetilde{u}^{\omega_n,j}_n:\RR^2\to T^*\TT^d$ connecting $\bar{u}^{\omega_n,j}_{n,-}=(\bar{q}^{\omega_n,j}_{n,-},p^{\omega_n,j}_{n,-})$ and $\bar{u}^{\omega_n,j}_{n,+}=(\bar{q}^{\omega_n,j}_{n,+},p^{\omega_n,j}_{n,+})$ in the sense of $(*3')$. In order to establish that the symplectic actions for $\bar{u}^{\omega_n,j}_{n,-}$ and $\bar{u}^{\omega_n,j}_{n,+}$ are different from each other, we crucially use that the Floer curve must be nontrivial due to $(*4')$, i.e., it must intersect a given homology cycle. In order to see that this argument carries through to the limit as $n\to\infty$, we show that the Floer maps $\widetilde{u}^{\omega_n,j}_n$ themselves converge in distribution.\\
\indent But before we can state the corresponding statement and prove it, we first need the following technical result about the Cauchy-Riemann operator $\overline{\partial}_J(\widetilde{u})=\partial_s\widetilde{u}+J_t(\widetilde{u})\partial_t\widetilde{u}$.
\begin{lemma}\label{regularity} Fix some real number $0\leq\alpha\leq 1$ and $p>2$. For every $S>0$ there exists $c>0$ such that for every $W^{1,p}$-map $\widetilde{u}:[-S,+S]\times[0,1]\to T^*\TT^d$ we have \[\|\widetilde{u}\|_{\alpha+1,p}\leq c\left(\|\overline{\partial}_J(\widetilde{u})\|_{\alpha,p}+\|\widetilde{u}\|_{0,p}\right),\] where $\|\cdot\|_{\alpha,p}$ denotes the $W^{\alpha,p}$-norm.\end{lemma}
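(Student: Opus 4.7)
The plan is to treat this as a fractional-order elliptic estimate for the nonlinear Cauchy--Riemann operator $\overline{\partial}_J$ on a strip, deduced from the classical integer cases $\alpha=0$ and $\alpha=1$ plus standard $L^p$-theory for the constant-coefficient model operator. First I would reduce to the linear constant-coefficient problem. Choose a fixed almost complex structure $J_0$ (for instance, the standard one on $\RR^{2d}\simeq T^*\TT^d$) and write
\[\overline{\partial}_J(\widetilde{u})=\overline{\partial}_0\widetilde{u}+\bigl(J_t(\widetilde{u})-J_0\bigr)\partial_t\widetilde{u},\quad \overline{\partial}_0=\partial_s+J_0\partial_t.\]
Since $\overline{\partial}_0$ is a first-order elliptic operator on $\RR^2$ with invertible constant symbol away from the origin, Mikhlin--H\"ormander multiplier theory yields the estimate $\|v\|_{\alpha+1,p}(\RR^2)\leq c_0\|\overline{\partial}_0v\|_{\alpha,p}(\RR^2)$ for every $1<p<\infty$ and every $0\leq\alpha\leq 1$.

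Next I would transfer this global estimate to the strip $[-S,+S]\times [0,1]$. Using a smooth cut-off supported in a slightly larger strip together with a standard reflection extension (even in $s$ at $s=\pm S$, adapted to the boundary conditions $(*2)$ in $t$ if one wishes to allow unbounded strips), the localization error is bounded by the lower-order term $\|\widetilde{u}\|_{0,p}$. This establishes the required inequality for $\overline{\partial}_0$ in place of $\overline{\partial}_J$, which gives the cases $\alpha=0$ and $\alpha=1$ directly and, by complex interpolation between the spaces $W^{1,p}$ and $W^{2,p}$ on both sides of the inequality, every intermediate $0<\alpha<1$. This handles the linear part.

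To restore the genuine operator $\overline{\partial}_J$ one must absorb the perturbation $(J_t(\widetilde{u})-J_0)\partial_t\widetilde{u}$ into the left-hand side. Here the hypothesis $p>2$ plays a crucial role: in dimension two, $W^{1,p}$ embeds into $C^0$ (indeed into a H\"older space of positive exponent), so along a fixed $W^{1,p}$-map $\widetilde{u}$ the coefficient $J_t(\widetilde{u})$ is continuous and lies in $W^{\alpha,p}$ with norm controlled by $\|\widetilde{u}\|_{\alpha+1,p}$. The standard fractional product estimate (multiplication by $C^0\cap W^{\alpha,p}$ is bounded on $W^{\alpha,p}$ for $0\leq\alpha\leq 1$ when $p>2$) then yields
\[\bigl\|(J_t(\widetilde{u})-J_0)\partial_t\widetilde{u}\bigr\|_{\alpha,p}\leq \varepsilon(\widetilde{u})\cdot\|\widetilde{u}\|_{\alpha+1,p},\]
where $\varepsilon(\widetilde{u})$ is controlled by the local oscillation of $J_t(\widetilde{u})-J_0$. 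Using a partition of unity on $[-S,+S]\times [0,1]$, one works on small enough patches so that $\varepsilon(\widetilde{u})$ can be made less than $1/(2c_0)$, and the perturbation is absorbed into the left-hand side. Summing over patches, the overlap errors are again absorbed into $\|\widetilde{u}\|_{0,p}$, giving the stated estimate with a constant $c$ that depends on $S$, $p$, $\alpha$, $J$, and the $C^1$-norm of $J_t$ in its arguments.

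I expect the main obstacle to be precisely the fractional product estimate for the nonlinear term $J_t(\widetilde{u})\partial_t\widetilde{u}$: one has to ensure that composition with a fixed $W^{1,p}$-map and subsequent multiplication is bounded on the scale $W^{\alpha,p}$ for $0\leq\alpha\leq 1$. This is where the two-dimensionality of the domain, the assumption $p>2$, and the Morrey-type embedding $W^{1,p}\hookrightarrow C^0$ are used in an essential way; by contrast, the elliptic multiplier estimate for $\overline{\partial}_0$ and the cut-off to a bounded strip are entirely standard.
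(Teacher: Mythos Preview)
Your approach is correct but takes a longer, more explicit route than the paper. The paper's proof is essentially two lines: it cites McDuff--Salamon (Proposition~B.3.4) as a black box for the endpoint cases $\alpha=0$ and $\alpha=1$ of the full estimate for $\overline{\partial}_J$ on the strip, and then invokes classical interpolation theory (identifying $W^{\alpha,p}$ and $W^{\alpha+1,p}$ with the real or complex interpolation spaces between the integer-order Sobolev spaces) to obtain all intermediate $0<\alpha<1$ in one stroke.

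You instead rebuild the integer cases from scratch via the constant-coefficient model $\overline{\partial}_0$ and Mikhlin--H\"ormander multipliers, interpolate only the linear estimate, and then absorb the variable-coefficient perturbation $(J_t(\widetilde{u})-J_0)\partial_t\widetilde{u}$ by hand with a fractional product rule and a partition of unity. This is precisely the mechanism hidden inside the McDuff--Salamon reference, so your argument is more self-contained and makes explicit why $p>2$ in dimension two (hence $W^{1,p}\hookrightarrow C^0$) is needed. The price is length, and your absorption step makes the constant $c$ depend on the modulus of continuity of $\widetilde{u}$ through the patch size; the paper's shortcut buries the same dependence inside the cited integer-order result. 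For the application in the paper this is harmless, since the relevant maps satisfy uniform $C^0$-bounds anyway.
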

\begin{proof}
In (\cite{MDSa}, B.3.4) it is shown that the result holds for $\alpha=0$ and $\alpha=1$. Now it follows from the classical interpolation theory, see (\cite{AF}, 7.22) or (\cite{BL}, definition 2.4.1), that the same holds true for every $0\leq\alpha\leq 1$, i.e.,  when $\overline{\partial}_J$ is viewed as a map from $W^{\alpha+1,p}$ to $W^{\alpha,p}$. Here we would like to remark that fractional Sobolev spaces $W^{\alpha,p}$ and $W^{\alpha+1,p}$ with a non-integer number of weak derivatives can be either defined using Fourier transform on the space of tempered distributions or as interpolation space in the sense of (\cite{AF}, 7.5.7) or (\cite{BL}, definition 2.4.1)  between $L^p$ and $W^{1,p}$ or $W^{1,p}$ and $W^{2,p}$, respectively, see (\cite{BL}, theorem 6.4.5).
\end{proof}
Now let $S>0$ be chosen arbitrary and fixed. Observe that for each $n\in\NN$ the restricted Floer maps $\widetilde{u}^{\omega_n,j}_n: [-S,+S]\times [0,1]\to T^*\TT^d$, $\omega_n\in\Omega_n^d$  now define Dirac measures $\widetilde{\mu}^{\widetilde{u}^j}_n$ on $C^1([-S,+S]\times [0,1],T^*\TT^d)$. We can prove the following Gromov-Floer analogue of \Cref{tight}.
\begin{lemma}\label{tight-Floer}
    For every $\epsilon>0$ there exists a compact subset $\widetilde{C}^1_{\epsilon}$ of $C^1([-S,+S]\times[0,1],T^*\TT^d)$ such that for each $j=1,\ldots,d$ we have $\widetilde{\mu}^{\widetilde{u}^j}_n(\widetilde{C}^1_{\epsilon})\geq 1-\epsilon$ for $n$ sufficiently large. In particular, after passing to a subsequence, $\widetilde{\mu}^{\widetilde{u}^j}_n$ converges to some Borel measure $\widetilde{\mu}^{\widetilde{u}^j}$ on $C^1([-S,+S]\times[0,1],T^*\TT^d)$ as $n\to\infty$. 
\end{lemma}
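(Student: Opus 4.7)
The argument mirrors the proof of \Cref{tight}, now carried out on the two-dimensional strip $[-S,S]\times[0,1]$ rather than on the interval $[0,1]$. Fix $\epsilon>0$. Exactly as in \Cref{tight}, the weak convergence $\mu_n\to\mu$ together with the almost sure H\"older regularity of sample paths of Brownian motion yields $B>0$ and $n_0\in\NN$ with $\mu_n\bigl(W^{1/4,\infty}_B([0,1],\RR^d)\bigr)\geq 1-\epsilon$ for all $n\geq n_0$. It therefore suffices to exhibit a constant $\bar B>0$ together with exponents $\alpha\in(0,1/4)$ and $p>2/\alpha$ such that, whenever $W_n(\omega_n,\cdot)\in W^{1/4,\infty}_B$, each restricted Floer map $\widetilde u^{\omega_n,j}_n\big|_{[-S,S]\times[0,1]}$ satisfies $\|\widetilde u^{\omega_n,j}_n\|_{W^{1+\alpha,p}}\leq\bar B$. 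Since $W^{1+\alpha,p}\hookrightarrow C^1$ compactly on the strip whenever $\alpha-2/p>0$ (Rellich--Kondrachov), such a ball then serves as the required $\widetilde C^1_\epsilon$; the subsequential convergence is the standard consequence of Prokhorov's theorem on the Polish space $C^1([-S,S]\times[0,1],T^*\TT^d)$.

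The $W^{1+\alpha,p}$-bound is obtained by two rounds of elliptic bootstrap on the Floer equation $(*1')$. First, \Cref{cut-off} together with the standard Gromov--Floer $C^0$-bounds for Floer curves in cotangent bundles supplies a uniform sup-norm bound on $\widetilde u^{\omega_n,j}_n$ depending only on $B$ and $\|F\|_{C^1}$; in particular the right-hand side of $\overline\partial_J(\widetilde u^{\omega_n,j}_n)=-\nabla\bar K^{\omega_n}_t(\widetilde u^{\omega_n,j}_n)$ is uniformly $L^\infty$-bounded. Invoking \Cref{regularity} with $\alpha=0$ then yields a uniform $W^{1,p}$-bound on $\widetilde u^{\omega_n,j}_n$; Sobolev embedding on the two-dimensional strip upgrades this to H\"older continuity in $(s,t)$ with exponent $1-2/p$.

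I expect the main difficulty to lie in the second round, where one needs a uniform $W^{\alpha,p}$-bound on the composition $(s,t)\mapsto\nabla\bar K^{\omega_n}_t(\widetilde u^{\omega_n,j}_n(s,t))$ for some $\alpha$ slightly below $1/4$. Here the limited H\"older regularity of $W_n$ enters explicitly: in $\bar K^{\omega_n}_t(\bar q,p)=\tfrac12|p|^2+\chi_R(|p|)F_t\bigl(\bar q+\sigma W_n(\omega_n,t),p\bigr)$, the $t$-variable appears both via the smooth factor $F_t$ and via the merely H\"older-$1/4$ factor $W_n(\omega_n,t)$, while the spatial arguments are controlled by the H\"older-$(1-2/p)$ bound on $\widetilde u^{\omega_n,j}_n$ from the first round and by the compactness of the range of the cut-off Hamiltonian. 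Smoothness of $F$ together with these H\"older estimates makes the composition H\"older continuous in $(s,t)$ with exponent exactly $1/4$ (the bottleneck coming from $W_n$) and with norm depending only on $B$ and $\|F\|_{C^1}$; the standard embedding $C^{1/4}\hookrightarrow W^{\alpha,p}$ on the bounded strip, valid for any $\alpha<1/4$ and $p<\infty$, then delivers the required $W^{\alpha,p}$-control. A final application of \Cref{regularity} with this $\alpha$ completes the bootstrap and hence the tightness statement, and Prokhorov's theorem produces the desired convergent subsequence.
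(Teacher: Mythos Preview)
Your argument is correct and follows the same overall architecture as the paper's proof: reduce to a uniform $W^{1+\alpha,p}$-bound on the restricted Floer maps whenever $W_n(\omega_n,\cdot)$ lies in a fixed H\"older ball, obtain that bound via \Cref{regularity}, and conclude by Prokhorov. The one genuine difference lies in how you obtain the preliminary regularity of $\widetilde u^{\omega_n,j}_n$ needed to control the inhomogeneity $\nabla\bar K^{\omega_n}_t(\widetilde u^{\omega_n,j}_n)$ in the fractional norm. The paper does not perform a first elliptic round; instead it invokes Gromov--Floer compactness directly: if the $C^1$-norms of the $\widetilde u^{\omega_n,j}_n$ were unbounded, a holomorphic sphere would bubble off, which is impossible since the symplectic form on $T^*\TT^d$ is exact. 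This immediately yields a uniform $C^1$-bound, so that $\widetilde u^{\omega_n,j}_n+(\sigma W_n(\omega_n,\cdot),0)$ is $C^{1/4}$-bounded and one may apply \Cref{regularity} a single time with $\alpha$ near $1/4$. Your route replaces this geometric step by a purely analytic one: a first pass through \Cref{regularity} with $\alpha=0$ gives a $W^{1,p}$-bound and hence H\"older continuity by Sobolev embedding, which is enough to run the second pass. Both approaches are valid; the paper's buys a slightly shorter argument at the price of importing the bubbling analysis, while yours stays entirely within the elliptic-regularity framework and is arguably more self-contained. One minor remark: your claim that the H\"older constant of the composition depends only on $B$ and $\|F\|_{C^1}$ is a slight overstatement---it also depends on second derivatives of $F$ on the compact region $\{|p|\le R+1\}$ determined by the cut-off, which are finite since $F$ is smooth.
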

\begin{proof}
As in the proof of \Cref{tight} it suffices to show is that there exists $\bar{B}>0$ with the following property for all $n\in\NN$, $\omega_n\in\Omega_n^d$: If $W_n(\omega_n,\cdot)\in W^{\frac{1}{4},\infty}_B([0,1],\TT^d)$ with $B>0$ from the proof of \Cref{tight}, then $\widetilde{u}^{\omega_n,j}_n\in \widetilde{C}^1_{\epsilon}$, $j=1,\ldots,d$ with the compact subset $\widetilde{C}^1_{\epsilon}:=W^{1\frac{1}{4},p}_{\bar{B}}([-S,+S]\times [0,1],T^*\TT^d)$ of all maps in $C^1([-S,+S]\times[0,1],T^*\TT^d)$ with $W^{1\frac{1}{4},p}$-norm less than or equal to $\bar{B}$. Note that here $p>2$ is chosen large enough such that $W^{1\frac{1}{4},p}$ embeds compactly into $C^1$. 
As a first step we observe that, since we employ the cut-off Hamiltonian $\bar{K}_t^{\omega_n}(q,p)=\frac{1}{2}|p|^2+\bar{G}_t^{\omega_n}(q,p)$, $\bar{G}_t^{\omega_n}(q,p)=\chi_R(|p|)\cdot G_t^{\omega_n}(q,p)$ instead of the original Hamiltonian $K_t^{\omega_n}(q,p)=\frac{1}{2}|p|^2+G_t^{\omega_n}(q,p)$, it follows that the standard $C^0$-bounds for Floer curves in cotangent bundles from \cite{C} are available. In particular, there is a bound for the $C^0$-norm of $\widetilde{u}^{\omega_n,j}_n$ which just depends on the chosen $B>0$ in view of the choice of $R>0$ in \Cref{cut-off}. While the uniform energy bound given by twice the Hofer norm of $F$ is sufficient to establish uniform $L^2$-bounds for the first derivatives $T\widetilde{u}^{\omega_n,j}_n$, due to the fact that $W^{1,2}([-S,+S]\times [0,1],T^*\TT^d)$ does not embed into $C^0([-S,+S]\times [0,1],T^*\TT^d)$, the latter is not sufficient. However, together with the $C^0$-bounds mentioned above, we now again make use of the fact that the standard Gromov-Floer compactness is in place, which in turn shows that a uniform $C^0$-bound for the first derivatives $T\widetilde{u}^{\omega_n,j}_n$ can be established. For the proof, note that, if the latter bound would not exist, then within the set of all restricted Floer maps $\widetilde{u}^{\omega_n,j}_n$ one would find a sequence which would develop a holomorphic sphere in some point in $[-S,+S]\times [0,1]$, which in turn is excluded by the exactness of the symplectic form on $T^*\TT^d$. Since $\widetilde{u}^{\omega_n,j}_n$ is hence uniformly bounded with respect to the $C^1$-norm and $W_n(\omega_n,\cdot)$ is uniformly bounded with respect to the Hölder $W^{\frac{1}{4},\infty}$-norm, their sum $\widetilde{u}^{\omega_n,j}_n+(\sigma W_n(\omega_n,\cdot),0)$ is uniformly bounded with respect to the $W^{\frac{1}{4},p}$-norm for every $p>2$. 
Since each Floer map $\widetilde{u}^{\omega_n,j}_n$ satisfies the Floer equation $\overline{\partial}_J(\widetilde{u}^{\omega_n,j}_n)+\nabla H_t(\widetilde{u}^{\omega_n,j}_n+(\sigma W_n(\omega_n,\cdot),0))=0$, it follows from \Cref{regularity} that $\widetilde{u}^{\omega_n,j}_n$ is indeed uniformly bounded with respect to the $W^{1\frac{1}{4},p}$-norm. \end{proof}

In order to finish the proof of \Cref{main}, we use the standard (in)equality relating the energy of the restricted Floer maps $\widetilde{u}^{\omega_n,j}_n$ with the symplectic action of their asymptotic orbits $\bar{u}^{\omega_n,j}_{n,\pm}$, \[\int_{-S}^{+S}\int_0^1 \left|\partial_s\widetilde{u}^{\omega_n,j}_n(s,t)\right|^2\,dt\,ds \,\leq\, \mathcal{L}_n^{\omega_n}(\bar{q}^{\omega_n,j}_{n,+},p^{\omega_n,j}_{n,+}) - \mathcal{L}_n^{\omega_n}(\bar{q}^{\omega_n,j}_{n,-},p^{\omega_n,j}_{n,-}),\] together with 
\begin{eqnarray*} \EE\mathcal{L}_n^{\omega_n}(\bar{q}^{\omega_n,j}_{n,\pm},p^{\omega_n,j}_{n,\pm})&=&\EE\int_0^1 \left( dK^{\omega_n}_t\cdot p\frac{\partial}{\partial p}-K^{\omega_n}_t\right)(\bar{q}^{\omega_n,j}_{n,\pm}(t),p^{\omega_n,j}_{n,\pm}(t))\,dt\\&=&\EE\int_0^1 \left( dH_t\cdot p\frac{\partial}{\partial p}-H_t\right)(q^{\omega_n,j}_{n,\pm}(t),p^{\omega_n,j}_{n,\pm}(t))\,dt\\&=&\int_{C^0}\int_0^1 \left(p(t)\frac{\partial H_t}{\partial p}(q(t),p(t))- H_t(q(t),p(t))\right)\,dt\,d\mu_n^{u^{j,\pm}}\\&=&\int_0^1\int_{T^*\TT^d} \left(p\frac{\partial H_t}{\partial p}(q,p)- H_t(q,p)\right)\,\rho^{u^{j,\pm}}_n(t,q,p) \,dp\,dq \,dt
\end{eqnarray*}
to deduce that the energy of $\widetilde{\mu}^{\widetilde{u}^j}_n$, that is, the expectation value of the energy of the Floer curves $\widetilde{u}^{\omega_n,j}_n$, \[\EE\int_{-S}^{+S}\int_0^1 \left|\partial_s\widetilde{u}^{\omega_n,j}_n(s,t)\right|^2\,dt\,ds=\int_{C^1}\int_{-S}^{+S}\int_0^1 \left|\partial_s u(s,t)\right|^2\,dt\,ds\,d\widetilde{\mu}^j_n\] is bounded from above by the difference $\mathcal{L}(\rho^{u^{j,+}}_n)-\mathcal{L}(\rho^{u^{j,-}}_n)$ of the symplectic actions of $\rho^{u^{j,\pm}}_n$, \[\mathcal{L}(\rho^{u^{j,\pm}}_n)=\int_0^1\int_{T^*\TT^d} \left(p\frac{\partial H_t}{\partial p}(q,p)- H_t(q,p)\right)\,\rho^{u^{j,\pm}}_n(t,q,p) \,dp\,dq \,dt.\]
Taking limits as $n\to\infty$, it follows that the energy of $\widetilde{\mu}^{\widetilde{u}^j}$, \[\int_{C^1}\int_{-S}^{+S}\int_0^1 \left|\partial_s u(s,t)\right|^2\,dt\,ds\,d\widetilde{\mu}^j\] is bounded from above by the difference $\mathcal{L}(\rho^{u^{j,+}})-\mathcal{L}(\rho^{u^{j,-}})$ of the symplectic actions of $\rho^{u^{j,\pm}}$. With this we can deduce the strict inequality $\mathcal{L}(\rho^{u^{j,-}})<\mathcal{L}(\rho^{u^{j,+}})$: 
Assuming to the contrary that $\mathcal{L}(\rho^{u^{j,-}})\geq\mathcal{L}(\rho^{u^{j,+}})$, that is, $\mathcal{L}(\rho^{u^{j,-}})=\mathcal{L}(\rho^{u^{j,+}})$, it would follow that \[\int_{C^1}\int_{-S}^{+S}\int_0^1 \left|\partial_s u(s,t)\right|^2\,dt\,ds\,d\widetilde{\mu}^j=0,\] that is, the limiting Borel measure $\widetilde{\mu}^j$ would have full measure on maps in $C^1([-S,+S]\times[0,1],T^*\TT^d)$ which are independent of $s\in [-S,+S]$. But the latter is impossible in view of the intersection condition $(*4')$ imposed on all Floer curves.\\ 
\indent Summarizing, we find that \[\mathcal{L}(\rho^{u^{1,-}})<\mathcal{L}(\rho^{u^{1,+}})\leq \mathcal{L}(\rho^{u^{2,-}})<\ldots<\mathcal{L}(\rho^{u^{d-1,+}})\leq \mathcal{L}(\rho^{u^{d,-}})<\mathcal{L}(\rho^{u^{d,+}})\] which in turn implies that there at least $d+1$ different solutions of the Hamiltonian Fokker-Planck equation.\\

\end{document}